\numberwithin{equation}{section}
\begin{document}

\theoremstyle{plain}
\newtheorem{theorem}{Theorem}[section]
\newtheorem{lemma}[theorem]{Lemma}
\newtheorem{proposition}[theorem]{Proposition}
\newtheorem{corollary}[theorem]{Corollary}
\newtheorem{conjecture}[theorem]{Conjecture}

\theoremstyle{definition}
\newtheorem*{definition}{Definition}

\theoremstyle{remark}
\newtheorem*{remark}{Remark}
\newtheorem{example}{Example}[section]
\newtheorem*{remarks}{Remarks}

\newcommand{\cc}{{\mathbb C}}
\newcommand{\qq}{{\mathbb Q}}
\newcommand{\rr}{{\mathbb R}}
\newcommand{\nn}{{\mathbb N}}
\newcommand{\zz}{{\mathbb Z}}
\newcommand{\pp}{{\mathbb P}}
\newcommand{\al}{\alpha}
\newcommand{\be}{\beta}
\newcommand{\ga}{\gamma}
\newcommand{\ze}{\zeta}
\newcommand{\om}{\omega}
\newcommand{\ep}{\epsilon}
\newcommand{\la}{\lambda}
\newcommand{\de}{\delta}
\newcommand{\De}{\Delta}
\newcommand{\si}{\sigma}
\newcommand{\ta}{\theta}
 \newcommand{\Exp}{{\rm Exp}}
\newcommand{\legen}[2]{\genfrac{(}{)}{}{}{#1}{#2}}
\def\End{{\rm End}}
\newcommand{\uu}{{\mathcal U}}
\newcommand{\tu}{\tilde u}

\newcommand{\ts}{\thinspace}
\newcommand{\sse}{\subseteq}

\newcommand{\rx}{\rr[x]}
\newcommand{\new}{\text{New}}
\newcommand{\ux}{\underline{x}}
\newcommand{\vsp}{\vspace{.1cm}}


\title{A note on mediated simplices}

\begin{abstract} Many homogeneous polynomials that arise in the study of sums of squares and 
Hilbert's 17th problem come from  monomial substitutions into the arithmetic-geometric inequality.  
In 1989, the second author gave a
necessary and sufficient condition for such a form to have a representation as a sum of squares of 
forms [\emph{Math.~Ann.~}\textbf{283}, 431--464] , 
involving the arrangement of  lattice points in the simplex whose vertices were the $n$-tuples 
of the exponents used in the substitution. Further, a claim was made, and not proven, that 
sufficiently large dilations of any 
such simplex will also satisfy
 this condition. The aim of this short note is to prove the claim, and provide further context for
 the result, both in the study of Hilbert's 17th Problem and the study of lattice point simplices. 
 \end{abstract}

\author[V.~Powers]{Victoria Powers}
\author[B.~Reznick]{Bruce Reznick}
\address[V.~Powers]{Department of Mathematics, Emory University,
Atlanta, GA 30322}
\email{vpowers@emory.edu}
\address[B.~Reznick]{Department of Mathematics, University of 
Illinois at Urbana-Champaign, Urbana, IL 61801} 
\email{reznick@illinois.edu}
\date{\today} 

\maketitle

\section{Introduction}

In 1989, the second author considered  \cite{Re3} a class of homogeneous polynomials 
(forms) 
which had arisen in the study of Hilbert's 17th Problem as monomial substitutions into the 
arithmetic-geometric inequality. The goal was to determine when such a form, which must be 
positive semidefinite, had a representation  as a sum of squares of forms. The answer was a 
necessary and sufficient condition 
involving the arrangement of  lattice points in the simplex whose vertices were the $n$-tuples 
of the exponents used in the substitution. 
Further, a claim was made in  \cite{Re3}, and not proven, that sufficiently large dilations of any 
such simplex will also satisfy
 this condition. The aim of this short note is to prove the claim, and provide further context for
 the result, both in the study of Hilbert's 17th Problem and the study of lattice point simplices. 
 The second author is happy to acknowledge that the return to this claim was triggered by two
 nearly simultaneous events: an invitation to speak at the 2019 SIAM Conference on Applied
 Algebraic Geometry, and a request from Jie Wang for a copy of \cite{Re4}, which was
announced in \cite{Re3} but  never written.

 \section{Preliminaries}

We work with homogeneous polynomials (forms) in 
$\rx = \rr[x_1, \dots , x_n]$, the ring of real polynomials in $n$ variables.  
Write the monomial $x_1^{\al_1}\cdots x_n^{\al_n}$ as $x^{\al}$, for 
$\al = (\al_1,\dots,\al_n) \in \zz^n$.  For $p(x)  = \sum_{\al} c(\al)x^{\al} \in \rx$, let 
supp$(p) = \{ \al \mid c(\al) \neq 0 \}$, write
$\new(p)$ for the {\it Newton polytope} of $p$, that is, the convex hull of supp$(p)$, and 
let $C(p) = \new(p) \cap \zz^n$.

A form $p \in \rx$ is {\it positive semidefinite} or {\it psd} if $p(x) \geq 0$ for all $x \in \rr^n$.  
It is a {\it sum of squares} or {\it sos} if $p = \sum_j h_j^2$
for forms $h_j \in \rx$.  Clearly, every sos form is psd.  In 1888, D. Hilbert \cite{Hil1} 
proved that there exist psd forms which are not sos. 

The {\it arithmetic-geometric inequality} (or {\it AGI})  states that if $t_i \ge 0$, 
$\la_i \ge 0$ and $\sum_{i=1}^n \la _i = 1$, then
\[
\la_1 t_1 + \cdots + \la_n t_n \ge t_1^{\la_1} \cdots t_n^{\la_n},
\]
with equality only if the $t_i$'s are equal. In 1891, A. Hurwitz \cite{Hu}  gave a proof of the AGI,
in which the key step was setting $\la_i  = a_i/N$ where $a_i \in \zz^n$ with  $\sum a_i = N$ for 
even $N$, and  $t_i = x_i^N$. Under this substitution and a scaling,  one obtains the form
\[
a_1 x_1^N + \cdots + a_n x_n^N  - N x_1^{a_1} \cdots x_n^{a_n}.
\]
Hurwitz then proves that each such form is sos (in fact, a sum of squares of
binomials), and hence psd. (He cites \cite{Hil1} to observe that this is not automatic.)
For example, after a scaling and relabeling of the $x_i$'s as $x,y,z$, we have
\[
\begin{gathered}
H(x,y,z): = x^6 + y^6 + z^6 - 3x^2y^2z^2 \\ =  \tfrac 32 (x^2 y - y z^2)^2 + (x^3 - x y^2)^2 
+ \tfrac 12 (x^2 y - y^3)^2 +   (z^3 - y^2z)^2 + \tfrac 12(yz^2  - y^3)^2.
\end{gathered}
\]
For more on Hurwitz' proof, see \cite{Re2}, where Eq.~(3.5) gives a  representation of $H$
as a sum of four squares, one of which is the square of a trinomial. 

The first explicit example of a psd form which is not sos was presented in 1967 by T.  Motzkin 
\cite{M}.  It, too, arises as a substitution into the AGI: let $t_1 = x^4y^2, t_2 = x^2y^4, t_3 = 
z^6$, $\la_i = \frac 13$ and scale:
\[
M(x,y,z) := x^4 y^2 + x^2y^4 + z^6 - 3x^2y^2z^2. 
\]
The proof that $M$ is not sos was based on a preliminary argument that if $M = \sum h_j^2$,
then $h_j(x,y,z) = c_{1j}x^2y +  c_{2j}xy^2 + c_{3j}z^3 + c_{4j}xyz$: the coefficient of
$x^2y^2z^2$ in $\sum h_j^2$ is then $\sum c_{4j}^2 \neq -3$. The argument of Motzkin's
proof was formalized in \cite{Re1}, where it is shown that, in general,
 $p = \sum h_j^2$ implies that $C(h_j) \subseteq \frac 12 C(p)$.

The following machinery was developed in \cite{Re1, Re3} to analyze such forms.
Suppose
$\{ u_1, \dots, u_n \}$ with $u_i \in (2\zz_{\ge 0})^n$ and 
$\sum_{j=1}^n u_{ij} = 2d$.  We further
assume that $\uu = cvx(\{ u_1, \dots, u_n \})$ is a simplex, and that $w \in \uu \cap \zz^n$ 
has the barycentric representation $w = \sum \la_i u_i$, $\la_i \ge 0$ and $\sum_{i=1}^n \la _i 
= 1$. In this way,  the substitution $\{t_i = x^{u_i}\}$ into the AGI yields a psd form of degree 
2d,
\[
p(x) = \la_1 x^{u_1} + \cdots + \la_n x^{u_n}  - x^w.
\]
This was called an {\it agiform} in \cite{Re3}. Observe that $C(p) = \uu \cap \zz^n$.
More generally, a polynomial for which supp$(p)=\{u_1,\dots,u_n,w\}$ is called a {\it circuit  
polynomial}. Circuit polynomials have recently been studied by M. Dressler, J. Forsg\aa rd, 
S. Iliman, T. de Wolff, and J. Wang; see for example  
\cite{IdW1}, \cite{DIdW}, \cite{FdW}, \cite{W}.   Interest in circuit polynomials is in part due to 
their use in finding efficiently-computable certificates of positivity based on the AGI, which 
are then  independent of sos representations.

\vsp
There is a geometric criterion which determines whether an agiform is sos. 

\vsp
\begin{definition}   Suppose $\uu$ is given as above, and let $S \subset \mathcal U \cap \zz^n$
be  a set of lattice points containing the $u_i$'s. Then $S$ is {\it $\mathcal U$-mediated} if for
every $y \in S$, either $y = u_i$ for some $i$, or there exist $z_1 \neq z_2\in S \cap (2 \zz)^n$
 so  that $y = \tfrac12 (z_1 + z_2)$.   In other words, $S$ is $\uu$-mediated if every point in 
 $S$ is  either a vertex of $\uu$ or an average of two  different even points in $\uu$.
\end{definition}

\begin{theorem}\cite[Cor. 4.9]{Re3}\label{T:mediated}
With $\uu, \la_i$ as above, the agiform $\la_1 x^{u_1} + \cdots + \la_n x^{u_n}  - x^w$ 
is sos if and only if there is a $\uu$-mediated set containing $w$. 
 \end{theorem}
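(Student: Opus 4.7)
The plan is to establish the biconditional by proving each direction separately: the forward direction by inspecting the coefficients of any sos decomposition of $p$, and the backward direction by constructing an explicit sos representation from the mediation structure using binomial squares.

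For the forward direction, suppose $p = \sum_j h_j^2$. The Newton polytope result in \cite{Re1} gives $\text{supp}(h_j) \sse \tfrac12 C(p) = \tfrac12(\uu \cap \zz^n)$ for every $j$. Let $T = \bigcup_j \text{supp}(h_j)$ and take $S = 2T \cup \{u_1,\dots,u_n,w\}$; the points of $2T$ are then even lattice points of $\uu$. I would show $S$ is $\uu$-mediated by examining, for each non-vertex $y \in S$, the coefficient of $x^y$ in $\sum_j h_j^2$,
\[
\text{coef}(x^y) \;=\; \sum_j h_{j,y/2}^2 \;+\; \sum_j \sum_{\substack{\be+\ga=y\\\be \ne \ga}} h_{j,\be}\,h_{j,\ga},
\]
where the first sum is $0$ if $y$ is not even. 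For $y = 2\al \in 2T$ with $y$ neither a vertex nor $w$, the diagonal $\sum_j h_{j,\al}^2$ is strictly positive (since $\al \in T$), while the total coefficient is $0$ because $y \notin \text{supp}(p)$; hence some $\be \ne \ga$ in $T$ satisfy $\be + \ga = y$, so $y = \tfrac12(2\be + 2\ga)$ with $2\be, 2\ga$ distinct points of $S \cap (2\zz)^n$. The same argument at $y = w$, where the coefficient is $-1$, supplies the required mediation for $w$.

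For the backward direction, suppose $S$ is $\uu$-mediated with $w \in S$. For each non-vertex $y \in S$, choose a pair $z_y \ne z_y'$ in $S \cap (2\zz)^n$ with $y = \tfrac12(z_y + z_y')$, and form the sos binomial
\[
B_y \;=\; \tfrac12\bigl(x^{z_y/2} - x^{z_y'/2}\bigr)^{2} \;=\; \tfrac12 x^{z_y} + \tfrac12 x^{z_y'} - x^y.
\]
I would seek coefficients $c_y \ge 0$ with $\sum_y c_y B_y = p$. Comparing the coefficient of $x^{y'}$ on the two sides for each $y' \in S$ turns this into a flow problem on the directed graph whose edges go from each non-vertex $y$ to $z_y$ and $z_y'$: there is a unit source at $w$, sinks at the vertices $u_i$ with demands $\la_i$, and each non-vertex splits its through-flow in two equal halves. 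Because every split is a convex combination, the vector of barycentric coordinates is preserved under the flow, so the mass arriving at $u_i$ from a unit source at $w$ must be the $i$-th barycentric coordinate of $w$ in $\uu$, which is $\la_i$ by uniqueness. The flow values $c_y$ are then nonnegative, and $\sum_y c_y B_y$ is the desired sos representation of $p$.

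The main obstacle is the backward direction. The forward direction is a clean coefficient inspection, but the flow construction on a mediated set requires care: an even lattice point can be reached from several different mediations, so the mediation structure is in general a DAG (with possible cycles in the underlying undirected sense), and one must verify that consistent nonnegative coefficients can always be assigned. Establishing well-definedness of the flow, together with the martingale-style identity on barycentric coordinates that forces the correct weights at the vertex sinks, is the heart of the argument.
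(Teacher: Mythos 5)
First, a point of orientation: the paper does not prove Theorem \ref{T:mediated}; it quotes it from \cite{Re3} and explicitly refers the reader there for both directions, so your argument has to stand on its own. Your forward direction does: the containment $C(h_j) \sse \tfrac12 C(p)$ from \cite{Re1} places $2T$ inside $\uu\cap(2\zz)^n$, and at each non-vertex $y\in 2T\cup\{w\}$ the coefficient of $x^y$ in $\sum_j h_j^2$ is $\le 0$ while the diagonal contribution is $\ge 0$ (and $>0$ when $y\in 2T$), so some cross term $h_{j,\be}h_{j,\ga}\neq 0$ with $\be+\ga=y$ and $\be\neq\ga$ must survive, which is precisely the required mediation $y=\tfrac12(2\be+2\ga)$ by distinct even points of $S$. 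This is in substance the necessity argument of \cite{Re3}.

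The gap is in the backward direction, and it is exactly where you located it, but it is more serious than your phrasing suggests: the directed graph of chosen mediations need \emph{not} be acyclic. In one variable with vertices $0$ and $8$ and $S=\{0,2,4,6,8\}$, the legitimate choices $2=\tfrac12(0+4)$, $4=\tfrac12(2+6)$, $6=\tfrac12(4+8)$ produce the directed $2$-cycles $2\to4\to2$ and $4\to6\to4$; nothing in the definition of a mediated set rules this out. Consequently your ``flow'' is not a finite computation on a DAG but the occupation measure of an absorbing random walk (split mass in half at each non-vertex, absorb at the vertices), and you must prove that every non-vertex is transient so that the $c_y$ are finite. This does hold, but it needs an argument: if some nonempty set $T_0$ of non-vertices were closed under passing to a parent, the maximizer of a generic linear functional over $T_0$ would be the midpoint of two distinct points of $T_0$ attaining the same value, which is impossible. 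With transience established, the $c_y$ are finite and nonnegative, the coefficient bookkeeping closes (at a non-vertex $y'\neq w$ the incoming half-mass equals $c_{y'}$, while at $w$ the extra unit source leaves the coefficient $-1$), and your optional-stopping identity correctly forces the absorption probabilities, hence the vertex coefficients, to equal the $\la_i$ by uniqueness of barycentric coordinates. So the strategy is sound and, once repaired, yields the sum-of-binomial-squares representation that \cite{Re3} obtains for sufficiency; as written, however, the termination claim rests on a false structural assertion and is the one genuinely unproved step.
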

 
Up to scaling, both $H$ and $M$ are agiforms, since $w = (2,2,2)$ is the centroid 
to both $\uu_1 = \{ (4,2,0) , (2,4,0) ,(0,0,6) \}$ and 
$\uu_2 = \{(6,0,0) , (0,6,0) , (0,0,6) \}$.
By Theorem \ref{T:mediated}, M is not sos because 
$\uu_1 \cap (2\zz)^3 =\uu_1\cup\{w\}$ and it is impossible to write $w$ as an average of 
two different members of this set.  However, it is easy to check that 
the set 
\[
S = \{(6,0,0),(0,6,0),(0,0,6), (2,2,2),(4,2,0), (2,4,0),(0,2,4), (0,4,2)\}
\]
is $\uu_2$-mediated, providing an independent proof that $H$  is sos.  

We refer the reader to \cite{Re3} for the
separate proofs of the necessity and sufficiency in Theorem \ref{T:mediated}.

\section{Main Theorem}

The following theorem was asserted in \cite[Prop.2.7]{Re3}.

\begin{theorem} \label{main}   For every integer $k \geq \max\{ 2, n - 2 \}$,
$k\mathcal U \cap \zz^n$ is $(k\mathcal U)$-mediated. 
\end{theorem}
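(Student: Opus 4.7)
The plan is induction on the number of vertices of $\mathcal{U}$, with the base cases $n \le 4$ (where the hypothesis reduces to $k \ge 2$) checked directly. Let $y \in k\mathcal{U} \cap \zz^n$ be a non-vertex of $k\mathcal{U}$, and write $y = \sum_i \mu_i u_i$ in barycentric form with $\mu_i \ge 0$ and $\sum_i \mu_i = k$. The goal is to exhibit $v \in \zz^n \setminus \{0\}$ with $y+v, y-v \in k\mathcal{U} \cap (2\zz)^n$; then $z_1 := y-v$ and $z_2 := y+v$ witness the required mediation. Two constraints must be balanced simultaneously: a parity constraint $v \equiv y \pmod 2$ coordinatewise (so that $y\pm v \in (2\zz)^n$) and a simplex constraint ensuring $y \pm v$ both lie in $k\mathcal{U}$.

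The first step is a reduction to the interior of $k\mathcal{U}$. If some $\mu_j = 0$, then $y$ already lies in the sub-simplex $k\mathcal{U}_S := k\cdot\mathrm{cvx}\{u_i : i \in S\}$ for $S := \{i : \mu_i > 0\}$; the vertex set $\{u_i : i \in S\}$ inherits the hypotheses of the theorem, and since $k \ge \max\{2, n-2\} \ge \max\{2, |S|-2\}$, the inductive hypothesis yields a mediating pair in $k\mathcal{U}_S \cap (2\zz)^n \subseteq k\mathcal{U} \cap (2\zz)^n$. So we may assume every $\mu_i > 0$. If moreover $y$ is even and some pair $\mu_i, \mu_j \ge 1$ exists, then $v := u_i - u_j$ succeeds at once: it is nonzero and even (since each $u_\ell \in (2\zz)^n$), and its barycentric shifts of $\pm 1$ at coordinates $i, j$ fit inside $k\mathcal{U}$ thanks to $\mu_i, \mu_j \ge 1$.

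The remaining sub-cases are when $y$ has an odd coordinate or when at most one $\mu_i$ attains $1$. Here I would construct $v$ from the \emph{half-edge} vectors $\tfrac{1}{2}(u_i - u_j)$, which are integer because each $u_\ell$ is even. A $\zz$-combination of these half-edges, corrected if necessary by an even edge vector $u_p - u_q$, supplies a flexible family of integer displacements whose parity patterns can be arranged to match any parity realized by a lattice point of $k\mathcal{U}$. The main obstacle I anticipate is the concentrated case, where $y$ sits close to a single vertex $ku_1$: then $\mu_1$ is near $k$ while the other $n-1$ coordinates $\mu_i$ are small and possibly non-integer, so a parity-correcting $v$ requires small but nonzero barycentric shifts on each of the $n-1$ subdominant coordinates. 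Verifying that every such shift fits in the admissible window $[-\mu_i, k-\mu_i]$ is exactly what pins down the hypothesis $k \ge n-2$.
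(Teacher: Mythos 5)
Your setup is sound: reducing to the case where all barycentric coordinates are positive by passing to a face, and handling the ``bead'' case (your case $\mu_i,\mu_j\ge 1$ with $y$ even) via $v=u_i-u_j$, both match the paper. But the heart of the theorem is exactly the part you leave as a plan. In the paper one writes $w=\sum\be_i u_i$ with $\sum\be_i=k$ and seeks a bead $v=\sum a_iu_i$ ($a_i\in\zz_{\ge0}$, $\sum a_i=k$, $a_i\le 2\be_i$), so that $v$ and $2w-v$ are both even and in $k\uu$; the counting inequality $\sum_i\lfloor 2\be_i\rfloor\ge 2k-n+1\ge k$ settles this whenever $k\ge n-1$. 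Your proposal contains no substitute for this count: the assertion that $\zz$-combinations of half-edges $\tfrac12(u_i-u_j)$ ``can be arranged to match any parity'' while staying inside the windows $[-\mu_i,k-\mu_i]$ is precisely the statement to be proved, and you explicitly flag the concentrated case as an unresolved obstacle rather than resolving it.

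The gap is most visible at the base of your induction. For $n=4$ the hypothesis gives $k=2=n-2$, which is not a trivial case to ``check directly'' --- it is the delicate boundary case. There $\sum_i\lfloor 2\be_i\rfloor$ can equal $n-3=k-1$, one short of what the greedy choice of the $a_i$ requires, and no choice of bead $v$ with $2w-v\in k\uu$ need exist directly. The paper's escape is a genuinely new idea absent from your sketch: when the count fails, all fractional parts $\{2\be_i\}$ are nonzero and sum to $n-1$, so $\tu:=\sum_i(1-\{2\be_i\})u_i$ is an even lattice point strictly interior to $\uu$; one then either writes $w=(n-2)\tu$ as an average of $(n-3)\tu+u_1$ and $(n-1)\tu-u_1$, or subdivides $\uu$ at $\tu$ and recurses, terminating because $\uu$ has finitely many interior lattice points. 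Without this (or an equivalent mechanism), your argument cannot close the case $k=n-2$, which is the only reason the theorem's bound is $\max\{2,n-2\}$ rather than $n-1$.
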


\begin{corollary}\label{blowup}
Any agiform $p \in \mathbb R[x_1,\dots,x_n]$ can be 
written as a sum of squares of forms in the variables  $x_i^{1/k}$ for $k \ge \max\{ 2, n - 2 \}$.
\end{corollary}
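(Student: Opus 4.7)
The plan is to construct, for every non-vertex lattice point $y \in k\uu$, a pair of distinct even lattice points $z_1, z_2 \in k\uu$ with $y = \tfrac12(z_1+z_2)$. Writing $y = \sum_{i=1}^n b_i u_i$ in barycentric coordinates with $b_i \ge 0$ and $\sum b_i = k$, I look for nonnegative reals $\beta_i \le 2b_i$ with $\sum \beta_i = k$ such that $z_1 := \sum \beta_i u_i$ is an even lattice point. Then automatically $z_2 := 2y - z_1$ lies in $k\uu$ (its barycentric coordinates $2b_i - \beta_i$ are nonnegative and sum to $k$) and is even (as the difference of two even vectors), and $z_1 \ne z_2$ holds exactly when $\beta \ne b$.

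First, if some $b_{i^*} \ge k/2$, take $\beta = k e_{i^*}$: then $z_1 = k u_{i^*}$ is a vertex of $k\uu$, hence in $(2\zz)^n$; the inequality $2b_{i^*} - k \ge 0$ places $z_2$ in $k\uu$; and $z_1 \ne z_2$ since $y$ is not a vertex. Otherwise every $b_i < k/2$, and at least three of the $b_i$ are positive (two positive values each less than $k/2$ cannot sum to $k$). In this second case I seek $\beta = \alpha \in \zz_{\ge 0}^n$ with $\sum \alpha_i = k$ and $\alpha_i \le \lfloor 2b_i \rfloor$; such an $\alpha$ makes $z_1 = \sum \alpha_i u_i$ an integer combination of the even vectors $u_i$, hence automatically in $(2\zz)^n$. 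A greedy assignment produces such $\alpha$ exactly when $\sum_i \lfloor 2b_i \rfloor \ge k$, and since $\sum_i 2b_i = 2k$ is an integer with each $\{2b_i\} \in [0,1)$, the sum $\sum_i \{2b_i\}$ is a nonnegative integer at most $n - 1$, giving $\sum_i \lfloor 2b_i\rfloor \ge 2k - (n-1)$; this is $\ge k$ whenever $k \ge n - 1$, which disposes of every case with $k \ge n - 1$ (and in particular the threshold $k = 2$ for $n \le 3$).

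The hard part is the remaining borderline $k = n - 2$ with $n \ge 4$, where $\sum_i \{2b_i\}$ can equal $n - 1$, forcing $\sum_i \lfloor 2b_i\rfloor = k - 1$ and blocking the integer greedy argument. In this extremal subcase every $2b_i$ is non-integer, so no purely integer $\beta$ works. One must exploit the lattice condition $y \in \zz^n$ together with $u_i \in (2\zz)^n$ to find $\beta$'s that are not integer tuples but still make $z_1$ an even lattice vector---for example, by saturating $\beta_i = 2b_i$ on a carefully chosen subset $S$ of indices so that $\sum_{i \in S} 2b_i u_i \in (2\zz)^n$, and filling the remaining weight $k - 2\sum_{i \in S} b_i$ over $i \notin S$ with suitable rational or integer weights that preserve the parity and integrality of $z_1$. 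Verifying that such an $S$ always exists when $k = n - 2$, together with ensuring $\beta \ne b$, is the technical crux of the argument.
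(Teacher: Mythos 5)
There is a genuine gap. The part you do carry out is correct and essentially reproduces the paper's Theorem \ref{case1}: writing $y=\sum b_iu_i$, you seek a bead $z_1=\sum\alpha_iu_i$ with $\alpha_i\in\zz_{\ge0}$, $\sum\alpha_i=k$ and $\alpha_i\le\lfloor 2b_i\rfloor$, which exists precisely when $\sum_i\lfloor 2b_i\rfloor\ge k$, and the estimate $\sum_i\lfloor 2b_i\rfloor\ge 2k-(n-1)$ settles every $k\ge n-1$. (Two loose ends even there: when $y$ is itself a non-vertex bead you must still verify $z_1\ne z_2$, which the paper isolates as Lemma \ref{lem_bead}; and the corollary is not merely the mediated-set claim --- you also need the substitution $q(x)=p(x_1^k,\dots,x_n^k)$, the fact that $q$ is an agiform on $k\uu$, and Theorem \ref{T:mediated} to convert mediatedness into the sum-of-squares conclusion. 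You take that bridge for granted.)

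The decisive issue is the borderline case $k=n-2$ with $n\ge4$, which is exactly what is needed to reach the stated threshold $k\ge\max\{2,n-2\}$ for every $n\ge4$, and which you explicitly leave as ``the technical crux'' without an argument. Your proposed direction --- saturating $\beta_i=2b_i$ on a subset $S$ chosen so that $\sum_{i\in S}2b_iu_i\in(2\zz)^n$ --- is not the paper's route, and there is no reason such an $S$ should exist: in the blocked subcase every $\{2\be_i\}$ is nonzero and the residues of the vectors $2b_iu_i$ are not under your control. What the paper actually does (Theorem \ref{full}) is note that $\sum_i\lfloor 2\be_i\rfloor=n-3$ forces $\sum_i(1-\{2\be_i\})=1$ with every term positive, so that
\[
\tu:=\sum_{i=1}^n(1-\{2\be_i\})u_i=\sum_{i=1}^n(1+\lfloor 2\be_i\rfloor)u_i-2w
\]
is an \emph{even} lattice point strictly interior to $\uu$. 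If $w\ne(n-2)\tu$, one subdivides $\uu$ into $n$ subsimplices using $\tu$ as a new vertex and recurses, terminating because $\uu$ has only finitely many interior lattice points; if $w=(n-2)\tu$, one checks that some barycentric weight $d_i=1+\lfloor 2\be_i\rfloor$ of $\tu$ is at least $2$ and writes $w$ as the average of the even points $(n-3)\tu+u_1$ and $(n-1)\tu-u_1$, both of which lie in $(n-2)\uu$. Without an argument of this kind your proof establishes the corollary only for $k\ge n-1$, not at the claimed threshold; the Motzkin example ($n=3$, $k=1$) shows the borderline genuinely requires new ideas.
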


To prove Theorem \ref{main}, we show that $k \uu \cap \zz^n$ is $k \uu$-mediated.
That is, we show that any non-vertex  $w \in k \uu \cap \zz^n$  is the average of 
two  different  points in $k \uu \cap (2\zz)^n$.
For ease of exposition, we first prove a weaker version (Theorem \ref{case1}) 
in which $k \ge n -1$. The full proof for 
$n \ge 4$ and $k=n-2$ (Theorem \ref{full}) is more delicate. We defer the discussion of
Corollary \ref{blowup} to the next section.

We start with some notation and lemmas. First, 
recall that $t \in \rr$  may be written as  
$t = \lfloor t \rfloor + \{t\}$, where $\lfloor t \rfloor \in \zz$ and $ \{ t \}  \in [0,1)$. 
Also, if  $v = \sum a_iu_i \in k \uu$ with $a_i \in \zz_{\ge 0}, \sum a_i = k$, then
we say that $v$ is a {\it bead}. Observe that beads are always even.

\begin{lemma}  \label{lem_bead}  Suppose $k>1$ and
$v \in k \uu \cap \zz^n$ is a non-vertex bead. Then $v$ is an average of two different 
beads in $k \uu$.
\end{lemma}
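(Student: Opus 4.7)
The plan is to exploit the fact that a bead carries a barycentric decomposition, and that a non-vertex bead must use at least two of the $u_i$'s nontrivially; shifting one unit of mass back and forth between two such $u_i$'s then yields the desired pair whose average is $v$.

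Concretely, write $v = \sum_{i=1}^n a_i u_i$ with $a_i \in \zz_{\ge 0}$ and $\sum_i a_i = k$. If only one $a_i$ were nonzero, then $a_i = k$ would force $v = k u_i$, a vertex of $k\uu$, contrary to hypothesis. Hence there exist distinct indices $j \ne \ell$ with $a_j \ge 1$ and $a_\ell \ge 1$. Note that this step is where the assumption $k > 1$ is used: for $k=1$ the only beads are the vertices $u_i$ themselves.

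Next, I would define
\[
v_1 := \sum_{i \ne j,\ell} a_i u_i + (a_j + 1)u_j + (a_\ell - 1)u_\ell,
\]
\[
v_2 := \sum_{i \ne j,\ell} a_i u_i + (a_j - 1)u_j + (a_\ell + 1)u_\ell.
\]
Because $a_j - 1 \ge 0$ and $a_\ell - 1 \ge 0$, the coefficients in both expressions are nonnegative integers, and they still sum to $k$; hence both $v_1$ and $v_2$ are beads in $k\uu$. A direct computation gives $v_1 + v_2 = 2v$, so $v = \tfrac{1}{2}(v_1 + v_2)$, and $v_1 - v_2 = 2(u_j - u_\ell) \ne 0$ since $u_j \ne u_\ell$ are distinct vertices of the simplex, so $v_1 \ne v_2$.

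There is no real obstacle here; the only point requiring care is verifying that the hypothesis forces two of the $a_i$ to be strictly positive so that the mass-shifting move produces legitimate beads (nonnegative coefficients) rather than quantities with a $-1$ coefficient. Since beads are automatically in $(2\zz)^n$, the resulting decomposition is of exactly the type demanded by the $\uu$-mediated property, which is what the later arguments (Theorems \ref{case1} and \ref{full}) will need for handling the general non-bead lattice points in $k\uu$.
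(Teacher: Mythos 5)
Your proof is correct and is essentially the paper's own argument: the paper likewise observes that a non-vertex bead has at least two positive coefficients $a_1, a_2 \ge 1$ and writes $v$ as the average of the beads $v \pm (u_1 - u_2)$, which is exactly your mass-shifting pair $v_1, v_2$. Your version just spells out the nonnegativity check and the distinctness of $v_1$ and $v_2$ more explicitly.
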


\begin{proof}   Suppose that $v =  \sum a_iu_i$ is a non-vertex bead.  
At least two of the $a_i$'s must be positive; without loss of generality,
suppose $a_1,a_2 \ge 1$. Then $v$ is the average of the beads $v \pm(u_1-u_2)$ in
 $k \uu$.
\end{proof}

\begin{lemma} \label{lem_sum}

Suppose non-negative integers $b_i$ are given and $\sum_{i=1}^n b_i = R$. If $S \le R$, then
there exist non-negative integers $a_i$ so that $a_i \le b_i$ and $\sum_{i=1}^n a_i = S$.
\end{lemma}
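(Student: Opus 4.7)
The plan is to prove this by a straightforward induction on the non-negative integer $R - S$, which formalizes the obvious greedy idea of decrementing one coordinate at a time. In the base case $R - S = 0$, simply set $a_i = b_i$. For the inductive step, assume $R - S \ge 1$. Since $\sum_{i=1}^n b_i = R > S \ge 0$, at least one coordinate $b_j$ is positive, so I would replace $b_j$ by $b_j' := b_j - 1$ (leaving the other $b_i$'s unchanged) to obtain a new sequence of non-negative integers summing to $R - 1 \ge S$. Applying the inductive hypothesis to this new sequence yields non-negative integers $a_i$ with $a_i \le b_i'$ and $\sum a_i = S$; since $b_j' \le b_j$ and $b_i' = b_i$ for $i \ne j$, the inequalities $a_i \le b_i$ hold, completing the step.

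There is no genuine obstacle here — the entire content of the lemma is the remark that we may always subtract a unit from some positive coordinate of a non-negative integer partition, so we may reduce the total from $R$ down to any target $S \le R$ in exactly $R - S$ such moves. If one prefers a non-inductive phrasing, one can form the multiset consisting of $b_i$ copies of the label $i$ for each $i$ (a total of $R$ tokens), select any $S$ of them, and let $a_i$ be the number of selected tokens with label $i$; this choice automatically satisfies $0 \le a_i \le b_i$ and $\sum a_i = S$.
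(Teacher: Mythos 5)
Your proof is correct, and it formalizes the same elementary greedy idea as the paper. The paper simply writes the construction out explicitly in one step rather than by induction: it sets $s_k = \sum_{i=1}^k b_i$, takes the largest $k$ with $s_k \le S$, and puts $a_i = b_i$ for $i \le k$, $a_{k+1} = S - s_k$, and $a_j = 0$ thereafter — the non-inductive token-selection phrasing at the end of your proposal is essentially this.
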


\begin{proof}  Define the partial sums $s_k := \sum_{i=1}^k b_i$ and choose the largest 
$k$ so that
$s_k \leq S$.   Then set $a_i = b_i$ for $i = 1, \dots , k$; $a_{k+1} = S - s_k$; and 
$a_j = 0$ for $j = k+2, \dots , n$.

\end{proof}

\begin{theorem}  \label{case1} If $k \ge n - 1$, then $k \uu \cap \zz^n$ is 
$(k\mathcal U)$-mediated. 
\end{theorem}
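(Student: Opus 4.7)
The plan is to split the analysis according to whether the non-vertex lattice point $w \in k\uu \cap \zz^n$ is a bead or not. Write $w = \sum_{i=1}^n \mu_i u_i$ with $\mu_i \ge 0$ and $\sum \mu_i = k$, the (unique) barycentric coordinates of $w$ with respect to the dilated simplex. If $w$ is a non-vertex bead, Lemma \ref{lem_bead} produces two distinct beads whose average is $w$, finishing this case. Note that this case is empty when $k = 1$ (every bead is then a vertex), and for $n \ge 3$ the hypothesis $k \ge n-1$ already gives $k \ge 2$, so the invocation of Lemma \ref{lem_bead} is legal.

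The substantive case is when $w$ is not a bead, so some $\mu_i$ fails to be an integer. The strategy is to seek a bead $v = \sum \alpha_i u_i$ (so $\alpha_i \in \zz_{\ge 0}$ and $\sum \alpha_i = k$) such that $v' := 2w - v$ also lies in $k\uu$, and then write $w = \tfrac12 (v + v')$. Several properties are automatic: a bead is always an even lattice point in $k\uu$ since each $u_i$ is even; $v'$ is a lattice point (as $w$ and $v$ are) and is even (as $2w$ and $v$ are); and $v \ne v'$ because otherwise $v = w$, contradicting that $w$ is not a bead. So the only real requirement is $v' \in k\uu$, which in barycentric form is $2\mu_i - \alpha_i \ge 0$ for every $i$.

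I therefore need non-negative integers $\alpha_i$ with $\alpha_i \le \lfloor 2\mu_i \rfloor$ and $\sum \alpha_i = k$. This is exactly the setup of Lemma \ref{lem_sum} with $b_i = \lfloor 2\mu_i \rfloor$ and $S = k$, so such $\alpha_i$ exist as soon as $\sum_i \lfloor 2\mu_i \rfloor \ge k$. Since $\sum_i 2\mu_i = 2k$ is an integer, $\sum_i \{2\mu_i\}$ is a non-negative integer that is strictly less than $n$, hence at most $n - 1$. Therefore
\[
\sum_{i=1}^n \lfloor 2\mu_i \rfloor \;=\; 2k - \sum_{i=1}^n \{2\mu_i\} \;\ge\; 2k - (n-1),
\]
which is at least $k$ precisely when $k \ge n-1$, matching the hypothesis of the theorem.

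The main obstacle, and the reason the case $k = n-2$ is deferred to the more intricate Theorem \ref{full}, is that this counting bound is sharp: when $\sum_i \{2\mu_i\} = n - 1$ and $k = n-2$, we have $\sum_i \lfloor 2\mu_i \rfloor = k - 1$, so no bead $v$ with $\alpha_i \le 2\mu_i$ can sum to $k$. Any proof for $k = n-2$ must therefore allow one or both of $v, v'$ to be a non-bead even lattice point, presumably by carefully tracking which fractional parts $\{2\mu_i\}$ are close to $1$ and perturbing the naive construction in a controlled way.
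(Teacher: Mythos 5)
Your proof is correct and follows essentially the same route as the paper's: handle beads via Lemma \ref{lem_bead}, and otherwise find a bead $v$ with $2w - v \in k\uu$ by applying Lemma \ref{lem_sum} to the integers $\lfloor 2\mu_i \rfloor$, whose sum is at least $2k-(n-1) \ge k$. Your derivation of that bound via the integrality of $\sum_i \{2\mu_i\}$ is just a rephrasing of the paper's "strict integer inequality implies a gap of one" step, and your closing remark correctly identifies why the bound is sharp at $k=n-2$.
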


\begin{proof}   Suppose $w \in k\mathcal U \cap \zz^n$ is not a vertex, then we must show 
that $w$ is an average of two
different  points in $k\mathcal U \cap (2\zz)^n$.    If $w$ is a bead,  we are done by 
Lemma \ref{lem_bead}, so assume
that $w$ is not a bead.   
 If we can find a bead $v = \sum a_iu_i \in k \uu$ such that $2w - v \in k\mathcal U$, 
then $w$ is the average of $v$ and $2w - v$, both of which are even.    
Further, $v \neq 2w - v$ since $v$ is a bead and $w$ is not.  

Let $w =  \sum_{i=1}^n \la_i (ku_i) = \sum_{i=1}^n \be_i u_i$; since $w$ is not a bead, at 
least one $\be_i \not \in \zz$.  
It remains to show that we can find $a_i, \dots , a_n \in \zz$ such that 
$\sum a_i u_i \in k \uu$ and
\[
2w - \sum_{i=1}^n a_i u_i = \sum_{i=1}^n (2\be_i - a_i)v_i \in k \uu.
 \]
That is, we need to show there exist $a_i \in \zz_{\ge 0}$ with $\sum a_i = k$, so that 
$2\be_i \ge a_i$ for all $i$; 
it suffices to find $a_i$ so that $\lfloor 2\be_i \rfloor \ge a_i$.

We have  $\sum_{i=1}^n \lfloor 2\be_i \rfloor  > \sum_{i=1}^n (2\be_i -1) = 2k - n$, and since 
the $ \lfloor 2\be_i \rfloor$'s and $2k-n$ are integers,  a strict inequality implies a gap
of at least 1.   Then 
\[
\sum_{i=1}^n \lfloor 2\be_i \rfloor  \ge 2k - n +1= k + (k - (n-1)) \ge k. 
\]
By  Lemma \ref{lem_sum}, this means we can find the desired $a_i$'s,
completing the proof. 
\end{proof}

The Motzkin example shows that if $n=3$, then $(3-2)\mathcal U_1 \cap \zz^3$ is not a 
mediated set; however, for larger $n$, a multiplier of $n-2$ will work. 
\begin{theorem}\label{full}   If $n \ge 4$, then $(n-2)\mathcal U \cap \zz^n$ is 
$((n-2)\mathcal U$)-mediated. 
\end{theorem}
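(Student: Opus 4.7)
The plan is to follow Theorem \ref{case1}'s bead argument as far as possible and then handle a narrow ``tight case'' separately. For a non-vertex, non-bead $w = \sum \be_i u_i \in k\uu \cap \zz^n$ with $k = n-2$, I would again seek integers $a_i \ge 0$ with $\sum a_i = k$ and $a_i \le 2\be_i$, so that $v := \sum a_i u_i$ and $2w-v$ give the required decomposition. The same calculation as in Theorem \ref{case1} only yields
\[
\sum_{i=1}^n \lfloor 2\be_i \rfloor \;\ge\; 2k - n + 1 \;=\; k - 1,
\]
one short of what Lemma \ref{lem_sum} requires. If $\sum \lfloor 2\be_i \rfloor \ge k$ the bead exists and we are done as in Theorem \ref{case1}. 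The remaining tight case is $\sum \lfloor 2\be_i \rfloor = k-1$, equivalently $\sum \{2\be_i\} = n-1$ with each $\{2\be_i\} \in (0,1)$.

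In the tight case I would first extract structure. Because $n$ numbers in $[0,1)$ sum to $n-1$, none of them can equal $0$, so every $\be_i>0$ and no $\be_i$ is a half-integer. A pigeonhole argument (if $t$ of the $\{2\be_i\}$ are $\ge 1/2$, the sum is strictly less than $t + (n-t)/2$, forcing $t \ge n-1$) shows that at most one index $i_0$ has $\{2\be_{i_0}\} < 1/2$. Set $h_i := 1 - \{2\be_i\} \in (0,1)$; then $\sum h_i = 1$, so $p := \sum h_i u_i$ lies in the relative interior of $\uu$, and the identity
\[
p \;=\; \sum_{i=1}^n u_i \;+\; \sum_{i=1}^n \lfloor 2\be_i \rfloor u_i \;-\; 2w
\]
exhibits $p$ as an integer combination of vectors in $(2\zz)^n$, so $p$ is an even, non-vertex lattice point of $\uu$.

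Write $v_- := \sum \lfloor 2\be_i \rfloor u_i$ and consider the candidate $v := v_- + p$, whose barycentric coefficients $\lfloor 2\be_i \rfloor + h_i = 2\be_i + 1 - 2\{2\be_i\}$ sum to $k$, and which is even as a sum of two even lattice vectors. When every $\{2\be_i\} \ge 1/2$, each coefficient is in $[0,2\be_i]$, so $2w - v$ is in $k\uu$ too; a short calculation (solving $\be_i = 2\{2\be_i\}-1$) shows $v = w$ would force $\be_i = 1/3$ for all $i$ and hence $n=3$, so $v \ne w$ whenever $n \ge 4$. The main obstacle is the sub-case where a bad index $i_0$ with $\{2\be_{i_0}\} < 1/2$ exists: here $v_- + p$ fails because the $u_{i_0}$-coefficient of $2w - v$ becomes $2\{2\be_{i_0}\}-1 < 0$. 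I would try to patch this by replacing $p$ by $p + \epsilon(u_l - u_{i_0})$ for an index $l$ with $\{2\be_l\} + \{2\be_{i_0}\} \ge 1$---such an $l$ exists because $\sum_{i \ne i_0}\{2\be_i\} > n - 3/2$ forces the maximum among them to exceed $1 - \{2\be_{i_0}\}$. A real $\epsilon \in [\,1 - 2\{2\be_{i_0}\},\; 2\{2\be_l\}-1\,]$ makes both the modified $v$ and its reflection barycentric-feasible in $k\uu$, but this $\epsilon$ need not be an integer, so preserving evenness demands a further argument---most plausibly by replacing $p$ by a different non-vertex even lattice point $q \in \uu$ whose barycentric coordinates are all $\le \{2\be_i\}$. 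That last step, where the hypothesis $n\ge 4$ should enter decisively, is the delicate heart of the proof.
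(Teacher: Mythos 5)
Your reduction to the tight case $\sum_i \lfloor 2\be_i \rfloor = n-3$, equivalently $\sum_i \{2\be_i\} = n-1$ with every $\{2\be_i\} \in (0,1)$, matches the paper exactly, and your point $p = \sum_i (1-\{2\be_i\})u_i$ is precisely the interior even point $\tilde u$ that the paper constructs at the same juncture. Your direct decomposition $v = v_- + p$ does dispose of the sub-case in which every $\{2\be_i\} \ge \tfrac12$ (and your check that $v = w$ would force $\be_i = \tfrac13$ and hence $n=3$ is correct). But the sub-case with a bad index $i_0$ having $\{2\be_{i_0}\} < \tfrac12$ is a genuine gap, as you yourself acknowledge: the correction $\epsilon(u_l - u_{i_0})$ with real, generally non-integral $\epsilon$ destroys evenness, you do not produce the even lattice point $q$ that your last sentence calls for, and you have not shown the sub-case is vacuous. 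As written, the argument does not prove the theorem.

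The paper closes this gap by an entirely different mechanism: it never attempts to repair the bead decomposition in the tight case. Instead it uses the interior even point $\tilde u$ to subdivide $\mathcal U$ into the $n$ subsimplices $\mathcal U_\ell$ obtained by replacing each vertex in turn by $\tilde u$. If $w \neq (n-2)\tilde u$, then $w$ lies in some $(n-2)\mathcal U_\ell$ and is not a vertex there, so the whole argument is repeated for $\mathcal U_\ell$; the descent terminates because $\mathcal U$ has only finitely many interior lattice points. The only case requiring a bare-hands construction is $w = (n-2)\tilde u$, where writing $\tilde u = \sum_i \frac{d_i}{2n-3}u_i$ with $d_i = 1 + \lfloor 2\be_i\rfloor \ge 1$ and $\sum_i d_i = 2n-3 > n$ forces some $d_1 \ge 2$, and then $w$ is the average of the even points $(n-3)\tilde u + u_1$ and $(n-1)\tilde u - u_1$, both of which lie in $(n-2)\mathcal U$. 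To complete your proof you would need either to supply the even point $q$ in the bad sub-case or to adopt this subdivision-and-descent step.
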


\begin{proof}  
We shall show that if $w \in (n-2)\mathcal U \cap \zz^n$, then one of three things can
occur. In many cases, the argument of Theorem \ref{case1} can be used to write $w$ as an 
average
of a bead and another even point. If this argument fails, we can construct a ``new" 
interior point $\tu \in
\mathcal U \cap (2\zz)^n$. If $w = (n-2)\tu$, we show that $w$ is an average of two 
different even points in $(n-2)\mathcal U$. Otherwise, we may subdivide 
$\mathcal U$ into $n$ subsimplices 
$\mathcal U_{\ell}$, using $\tu$ in place of each of the vertices in turn.
Since $w$ must belong to one of the $(n-2)\mathcal U_{\ell}$'s, and is not a vertex,  we 
may repeat  the argument. 
The original simplex has only finitely many interior points, so this last case can only
be invoked finitely many times, and this will complete the proof.

Let $w = \sum_{i=1}^n \be_i u_i$ as before and assume $w$ is neither a 
vertex nor a bead. We have 
\[
\sum_{i=1}^n \lfloor 2\be_i \rfloor  \ge 1+ \sum_{i=1}^n (2\be_i -1)= 1+ 2(n-2) - n = n -3.
\]
If this sum is $\ge n-2$, then we may proceed as in the proof of Theorem \ref{case1} and find a 
bead $v$ so that $2w-v$ is in $(n-2)\mathcal U$.    

In the remaining case, 
$\sum_{i=1}^n \lfloor 2\be_i \rfloor = n-3$  implies 
\[
\sum_{i=1}^n \{2\be_i\}  = \sum_{i=1}^n (2\be_i - \lfloor 2\be_i \rfloor) =  2(n-2) -(n-3)= n-1.
 \]
Since each $\{2\be_i\}<1$, it follows that none of the summands is zero; that is, 
$2\be_i \notin \zz$.
Further,  $\sum_{i=1}^n (1-\{2\be_i\}) = n - (n-1) = 1$, and each 
$1-\{2\be_i\}$ is positive.  Define
\[
\begin{gathered}
\tilde u: = \sum_{i=1}^n (1-\{2\be_i\})u_i = \sum_{i=1}^n (1-(2\be_i -\lfloor 2\be_i \rfloor)u_i 
= \sum_{i=1}^n (1 + \lfloor 2\be_i \rfloor)u_i - 2w.
\end{gathered}
\]
Then $\tu$ is strictly interior to $\mathcal U$ (since $1- \{2\be)i\} > 0$)
and is also an even point.  In case
$w \neq (n-2)\tu$, we proceed as noted at the beginning of the proof, subdivide and
repeat. This step can only be invoked finitely many times. 

Otherwise,
\[
\begin{gathered}
w = (n-2)\tu = (n-2)\left(\sum_{i=1}^n (1 + \lfloor 2\be_i \rfloor)u_i - 2w\right) \implies \\
(2n-3)w = (n-2)\left(\sum_{i=1}^n (1 + \lfloor 2\be_i \rfloor)u_i \right) = (n-2)y,
\end{gathered}
\]
for some bead $y \in (2n-3)\mathcal U$.
Let $d_i:= 1 + \lfloor 2\be_i \rfloor \ge 0$, so that $\tilde u = \sum_{i=1}^n \frac{d_i}{2n-3} u_i$, 
where $1 \le d_i \in \zz$ and $\sum_i d_i = 2n-3$. 
Since $n \ge 4$, $2n-3 > n$, thus at least one of the $d_i$'s is $>1$.
Without loss of generality assume that $d_1 \ge 2$. 

We now note that $w = (n-2)\tilde u$ is the average of $(n-3)\tilde u + u_1$ and 
$(n-1)\tilde u - u_1$, both of which are evidently even points. 
The first is obviously in $(n-2)\mathcal U$.  The second, $(n-1)\tilde u - u_1$, can be written as
\[
(n-1) \left(\sum_{i=1}^n\frac{d_i}{2n-3} u_i \right)- u_1 = \left(\frac{(n-1)d_1}{2n-3} - 1\right)u_1 
+ \sum_{i=2}^n\frac{(n-1)d_i}{2n-3} u_i. 
\]
Since $d_1 \ge 2$, the coefficient of $u_1$ is $\ge \frac{2n-2}{2n-3} - 1 > 0$. Thus,
$(n-1)\tilde u - u_1$ is in  $(n-2)\mathcal U$, so $w$ is an average of two
different even points in $(n-2)\mathcal U$,  completing the proof. 
\end{proof}

 \section{Implication for Hilbert's 17th Problem}
 
 \begin{proof}[Proof of Corollary \ref{blowup}]
 Suppose $p(x) = \la_1 x^{u_1} + \cdots + \la_n x^{u_n}  - x^w$. Let
 \[
q(x_1,\dots,x_n) :=  p(x_1^k, \cdots, x_n^k) = \la_1 x^{ku_1} + \cdots + \la_n x^{ku_n}  - x^{kw},
 \]
which is also an agiform.  By Theorems \ref{T:mediated} and \ref{main}, $q$ is sos, and so
 \[
 q = \sum_{j=1}^r h_j^2 \implies p(x_1,\dots,x_n) =
 \sum_{j=1}^r h_j^2(x_1^{1/k},\dots,x_n^{1/k}),
 \]
 which shows that $p$ has the desired representation. 
\end{proof}
At the time that \cite{Re3} was written, and the proof given here was relegated to
the proposed preprint \cite{Re4},
the second author entertained the possibility that such a result might be true for
any psd form. Unfortunately, he discovered that the so-called ``Horn form" was a 
counterexample, and then abandoned writing   \cite{Re4}.
The Horn form was communicated to  M. Hall by A. Horn in the early 1960s,
as a counterexample 
to a  conjecture of P. H. Diananda (see \cite[p.25]{D} and \cite[p.334-5]{HN}).

Our example  comes from squaring the variables in the
Horn form, but the essence of this proof is found  in the original. Let
\[
F(x_1,\dots,x_5) = \bigg(\sum_{j=1}^5 x_j^2\bigg)^2 - 4\ \sum_{j=1}^5 x_j^2x_{j+1}^2.
\]
We view the subscripts cyclically mod 5, so that the coefficient of  $x_j^2x_k^2$ is $-2$ 
(resp. 2) if $|k-j| = 1$ (resp. $|k-j| = 2$);  $F$ is cyclically symmetric: 
\[
F(x_1,x_2,x_3,x_4,x_5) = 
F(x_2,x_3,x_4,x_5,x_1) = \cdots 
\] 
We first show that $F$ is psd. Consider $a \in \rr^5$; by the
cyclic symmetry, we may assume that $a_1^2 \le a_2^2$. We have the 
alternate representation
\[
F(x_1,\dots,x_5) = 
(x_1^2-x_2^2+x_3^2-x_4^2+x_5^2)^2 + 4(x_2^2-x_1^2)x_5^2 + 4x_1^2x_4^2, 
\]
hence  $F(a)\ge 0$, and so $F$ is psd. 

Suppose $F = \sum h_j^2$ and let the coefficient of $x_{\ell}^2$ in $h_j$
be $b_{j\ell}$. Then
\[
(x_1^2-x_2^2+x_3^2)^2 = F(x_1,x_2,x_3,0,0) = \sum_{j=1}^r h_j^2(x_1,x_2,x_3,0,0).
\]
Since the quadratic form $ h_j(x_1,x_2,x_3,0,0)$ vanishes on the (irreducible) real cone 
$g(x_1,x_2,x_3) = x_1^2-x_2^2+x_3^2=0$, it must be a multiple of $g$;
thus,  $b_{j1} = -b_{j2} = b_{j3}$. 
By cycling the variables, we
see that $b_{j2} = -b_{j3} = b_{j4}$, $b_{j3} = -b_{j4} = b_{j5}$ and $b_{j4} = -b_{j5} = b_{j1}$,
so that $b_{j1} = -b_{j1} = 0$ for all $j$. This implies that the coefficient of $x_1^4$ in $h_j$ is 
$\sum_j b_{j1}^2 = 0$, so each $h_j(x_1,x_2,x_3,0,0) = 0\cdot g$, a contradiction.

Suppose $F(x_1^k,\cdots,x_5^k)$ is sos. The proof proceeds as before,
leading to the equation
\[
(x_1^{2k}-x_2^{2k}+x_3^{2k})^2 = F(x_1^k,x_2^k,x_3^k,0,0) = \sum_{j=1}^r h_j^2(x_1,x_2,x_3,0,0).
\]
Each form $h_j(x_1,x_2,x_3,0,0)$, which has degree $2k$, vanishes on the irreducible real
variety $x_1^{2k}-x_2^{2k}+x_3^{2k}=0$, and hence must be a multiple of it. We obtain the same
fatal alternation of the coefficients of $x_\ell^{2k}$ which leads to the contradiction. Therefore, 
$F(x_1^k,x_2^k,x_3^k,x_4^k,x_5^k)$ is never sos.

\section{Implication for polytopes}

From the point of view of polytopes, one would more
naturally write $\uu = 2\mathcal P$, where $P$ is a lattice-point simplex in $\rr^n$. Further,
the conditions that the vertices lie on a hyperplane and have non-negative coefficients
seem artificial. In this way, we can drop the 
$n$-th component, so that $\mathcal P$ is the usual $n$-point lattice simplex in $\rr^{n-1}$. 

Let $d = n-1$. Then Theorem \ref{main} says that if 
$k \ge \max\{2,d-1\}$, then a non-vertex $w \in 2k\mathcal P\cap Z^d$ can be written 
as a sum of two different points in $w \in k\mathcal P\cap Z^d$. 

Requiring different points comes from the application to agiforms. There is some literature
on this subject without that requirement, which means that one needn't treat vertices as
a special case. The question then becomes: when can $w \in 2k\mathcal P\cap Z^d$ be 
written as a sum of two points in $k\mathcal P\cap Z^d$? This 
 has been studied by D. Handelman \cite{H1,H2,H3}. In particular, \cite{H3} contains a proof
 using the Shapley-Folkman Lemma that if $k \ge d-1$ (even for $n-1=d=2$), then every point
 in  $2k\mathcal P\cap Z^d$ is a sum of two (not necessarily distinct) points in 
 $k\mathcal P\cap Z^d$.


 \end{document}